\numberwithin{equation}{section}
\theoremstyle{plain}
\newtheorem{theorem}{Theorem}[section]
\newtheorem{lemma}{Lemma}[section]
\newtheorem{claim}{Claim}[section]
\newtheorem{corollary}{Corollary}[section]
\newtheorem{problem}{Problem}[section]
\newcommand{\be}{\begin{equation}}
\newcommand{\ee}{\end{equation}}
\newcommand{\bea}{\begin{eqnarray}}
\newcommand{\eea}{\end{eqnarray}}
\newcommand{\eeas}{\end{eqnarray*}}
\newcommand{\beas}{\begin{eqnarray*}}
\numberwithin{equation}{section}
\newcommand{\trace}{\mathop{\operator@font Trace}}
\newcommand{\vspan}{\mathop{\operator@font Span}}
\newcommand{\Int}{\mathop{\operator@font Int}}
\newcommand{\grad}{\mathop{\operator@font grad}}
\newcommand{\diver}{\mathop{\operator@font div}}
\newcommand{\Ad}{\mathop{\operator@font Ad}}
\newcommand{\id}{\mathop{\operator@font id}}
\newcommand{\spec}{{\mathop{\operator@font Spec}}(\mathcal H)}
\begin{document}

\title{Real hypersurfaces in complex Grassmannians of rank two  with semi-parallel structure Jacobi operator}
%\thanks{This work was supported in part by the UMRG research grant (Grant No. RG163/11AFR)}

%\author{Avik De, Tee-How {Loo} and Changhwa Woo\\
%Institute of Mathematical Sciences \\
%University of Malaya \\
%50603 Kuala Lumpur, Malaysia.\\
%%}
%%
%Department of Mathematics Education \\
%Woosuk University\\
% 565-701 Wanju, Jeonbuk, Republic Of Korea.	\\
%\ttfamily{de.math@gmail.com},
%\ttfamily{looth@um.edu.my},
%\ttfamily{legalgwch@naver.com}
%}

\author{Avik De, Tee-How {Loo} and Changhwa Woo\\
Department of Mathematical and Actuarial Sciences\\
Universiti Tunku Abdul Rahman\\
Jalan Sungai Long\\
43000 Cheras, Malaysia. \\
Institute of Mathematical Sciences \\
University of Malaya \\
50603 Kuala Lumpur, Malaysia.\\
Department of Mathematics Education \\
Woosuk University\\
565-701 Wanju, Jeonbuk, Republic Of Korea.	\\
\ttfamily{de.math@gmail.com}, %{ruennhuahl.sunway.edu.my},
\ttfamily{looth@um.edu.my},
\ttfamily{legalgwch@naver.com}
}

\date{}
\maketitle

\abstract{
We prove that there does not exist any real hypersurface in complex Grassmannians of rank two  with semi-parallel structure Jacobi operator. With this result, the non-existence of real hypersurface
in complex Grassmannians of rank two with recurrent structure Jacobi operator is proved.
}

\medskip\noindent
\emph{2010 Mathematics Subject Classification.}
Primary  53C25, 53C15; Secondary 53B20.

\medskip\noindent
\emph{Key words and phrases.}
complex Grassmannians of rank two. semi-parallel structure Jacobi operator. vanishing structure Jacobi operator. %Mixed foliate real hypersurfaces. Pseudo-umbilical real hypersurfaces.

%%%%%%%%%%%%%%%%%%%%%%%%%%%%%%%%%%%%%%%%%%%%%%%%%%%%%%%%%%%%%%%%%%%%%%%%%%%%%%%%%%%%%%%%%%%%%%%%%%%%%%%%%%%%

\section{Introduction}
Let  $\hat M^m(c)$ be the compact complex Grassmannian $SU_{m+2}/S(U_2U_m)$ of rank two
 (resp.  noncompact complex Grassmannian $SU_{2,m}/S(U_2U_m)$ of  rank two) for $c>0$ (resp. $c<0$),
where $c=\max||K||/8$ is a scaling factor for the Riemannian metric $g$ and $K$ is the sectional curvature for $\hat M^m(c)$.
 It is an irreducible Riemannian symmetric space equipped with  a K\"ahler structure $J$ and a quaternionic K\"ahler structure $\mathfrak{J}$ not containing $J$.

Let $M$ be a connected, oriented real hypersurface isometrically immersed in $\hat M^m(c)$, $m\geq 2$,
and $N$ be a unit normal vector field on $M$. 
Denote by the same $g$ the Riemannian metric on $M$.
The Reeb vector field $\xi$ is defined by $\xi = -JN$,  and we define $\xi_a = -J_a N$,  $a\in\{1,2,3\}$, where 
$\{J_1,J_2,J_3\}$  is a canonical local basis of $\mathfrak J$.
Denote by $D^\perp$ (resp. $\mathfrak D^\perp$) the distribution on $M$ spanned by $\xi$ (resp. $\{\xi_1,\xi_2,\xi_3\}$).
A real hypersurface $M$ in a K\"ahler  manifold  is said to be Hopf if  the Reeb vector field is principal, that is, $A\xi=\alpha\xi$.

The study of real hypersurfaces in $\hat M^m(c)$ was initiated by  Berndt and Suh in \cite{berndt-suh, berndt-suh2}.
They considered the invariance of   $\mathfrak D^\perp$  under the shape operator $A$ of  Hopf hypersurfaces $M$ in $\hat M^m(c)$
and proved a classification of such Hopf hypersurfaces in $\hat M^m(c)$. 

The structures $J$ and $\mathfrak{J}$ of the ambient space impose several restrictions on the geometry of its real hypersurfaces, for example, there does not exist any semi-parallel real hypersurface in $SU_{m+2}/S(U_2U_m)$
\cite{looth} while the non-existence problem of Hopf hypersurfaces in $\hat M^m(c)$ with parallel Ricci tensor were studied in \cite{suh2, suh3}. 
%On the other hand, it is also interesting to study real hypersurfaces in  $\hat M^m(c)$ with certain conditions  arisen from its curvature tensors.

Besides the shape operator and the Ricci tensor, there are particularly two operators  on a real hypersurface $M$ which draw much attention, 
namely the normal Jacobi operator $R_N$ %with respect to the unit normal $N$ 
and the structure Jacobi operator $R_\xi$. %  with respect to the structure tensor $\xi$ on a real hypersurface $M$. 
Denote by  $\hat R$ and $R$ the curvature tensor on $\hat M^m(c)$ and that induced on $M$ respectively. 
We define $R_N(X)=\hat R(X,N)N$ and $R_\xi(X)=R(X,\xi)\xi$ for any vector field $X$ tangent to $M$.

A $(1,s)$-tensor field $P$ is said to be \emph{semi-parallel} if $R\cdot P=0$, where the curvature tensor $R$ acts on $P$ as a derivation.
More precisely,
\begin{align*}
(R(X,Y)	&\cdot P)(X_1,\cdots,X_s)\\
							&=R(X,Y)P(X_1,\cdots,X_s)-\sum^s_{j=1}P(X_1,\cdots,R(X,Y)X_j,\cdots,X_s).
\end{align*}
The tensor field $P$  is said to be \emph{recurrent} if there exists an $1$-form $\omega$ on $M$ such that 
\[
(\nabla_XP)(X_1,\cdots,X_s)=\omega(X)P(X_1,\cdots,X_s).
\]
Clearly, a vanishing $\omega$ leads to  parallelism of $P$.

Recently, we proved the non-existence of real hypersurface in $SU_{m+2}/S(U_2U_m)$,  $m \ge 3$,  with  pseudo-parallel normal  Jacobi operator
%.  and completely solved the research problem in that direction
\cite{loothfilo}.
 On the other hand, related to the structure Jacobi operator $R_\xi$, 
Jeong, et al. proved that there does not exist any Hopf  hypersurface 
in $SU_{m+2}/S(U_2U_m)$,  $m \ge 3$, with parallel structure Jacobi operator \cite{jps}. Also, the non-existence of  Hopf hypersurfaces with $D^\perp$-parallel structure Jacobi operator is obtained under certain conditions \cite{jmps}. 
Jeong,   et al.
considered Reeb-parallel structure Jacobi operator and proved the following:
\begin{theorem}[\cite{jks}]
	Let $M$ be a Hopf hypersurface in 
	$SU_{m+2}/S(U_2U_m)$,  $m \ge 3$, with Reeb parallel structure Jacobi operator. If the principal curvature of the Reeb vector field $\xi$ on $M$ is non-vanishing and constant along the direction
	of the Reeb vector field $\xi$, then $M$ is an open part of a tube around a totally geodesic
	$SU_{m+1}/S(U_2U_{m-1})$
	in 
	$SU_{m+2}/S(U_2U_m)$
	 with radius $r\in (0,\frac{\pi}{4\sqrt{2}}) \cup (\frac{\pi}{4\sqrt{2}}, \frac{\pi}{\sqrt{8}})$.
\end{theorem}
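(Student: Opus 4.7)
The plan is to extract algebraic constraints from $(\nabla_\xi R_\xi)Y=0$, apply the Berndt--Suh classification, eliminate the $\mathbb H P^n$-tube case, and finally read off the admissible radius from the non-vanishing of $\alpha$. Using the explicit curvature tensor of $\hat M^m(c)$ (which splits into a K\"ahler part involving $J$ and a quaternionic K\"ahler part involving $J_1,J_2,J_3$) together with the Gauss equation and the Hopf condition $A\xi=\alpha\xi$, I would first rewrite
\begin{align*}
R_\xi Y = \hat R(Y,\xi)\xi + \alpha AY - \alpha^2 g(Y,\xi)\xi.
\end{align*}
The ambient term decomposes along $\xi$, the $\xi_\nu$, and the tangential parts of $J\xi_\nu$, so $R_\xi$ already encodes exactly the data controlled by the Berndt--Suh classification.

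Differentiating along $\xi$ and invoking the Codazzi equation, the standard Hopf identities for real hypersurfaces in $\hat M^m(c)$ give $\xi\alpha$ in terms of $g(\xi_\nu,\xi)\,g(J\xi_\nu,\xi)$ and express $\nabla_\xi A$ through $\alpha$, $A$, $A^2$ and curvature contributions. Substituting these into $(\nabla_\xi R_\xi)Y=0$ and polarising in $Y$, I would produce a family of algebraic identities relating $A$, $\alpha$, and the $\mathfrak D^\perp$-components of $\xi$.

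The main obstacle is converting that system into $A$-invariance of $\mathfrak D^\perp$. Pairing the identities with $\xi_\nu$ and with the tangential projection of $J\xi_\nu$, I expect an equation whose leading coefficient is a nonzero multiple of $\alpha$ unless the mixed quaternionic components of $AY$ vanish; since $\alpha\neq 0$ by hypothesis and constant along $\xi$, this should force $A\mathfrak D^\perp\subset\mathfrak D^\perp$. Berndt--Suh then realises $M$ locally either as a tube around the totally geodesic $SU_{m+1}/S(U_2U_{m-1})$ (type (A)) or, when $m=2n$, as a tube around $\mathbb H P^n$ (type (B)).

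Type (B) is eliminated by substituting its known principal curvatures and eigenspace structure back into $(\nabla_\xi R_\xi)=0$: in the type (B) model the quaternionic structure mixes distinct principal eigenspaces of $A$ in a way incompatible with Reeb parallelism once $\alpha\neq 0$. For the surviving type (A) tubes of radius $r$, the Reeb principal curvature is (up to normalisation) $\alpha=\sqrt{8}\cot(\sqrt{8}\,r)$, so the hypothesis $\alpha\neq 0$ excludes exactly $r=\pi/(4\sqrt{2})$ within the natural domain $r\in(0,\pi/\sqrt{8})$, which yields the stated range.
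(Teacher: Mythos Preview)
The statement you are attempting to prove is not proved in this paper at all. It appears in the introduction with the citation marker \texttt{[jks]} and is quoted verbatim as a result of Jeong, Kim and Suh (Canad.\ Math.\ Bull.\ \textbf{57} (2014), 821--833); the present paper supplies no argument for it. The paper's own results are Theorem~\ref{T1} (a short observation that semi-parallel $R_\xi$ forces $R_\xi=0$ on any almost contact metric manifold) and Theorem~\ref{T2} (non-existence of real hypersurfaces in $\hat M^m(c)$ with semi-parallel structure Jacobi operator), and all of Section~3 is devoted to the latter. There is therefore nothing in this paper to compare your proposal against.

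As an independent remark on your outline: the overall architecture (derive constraints from $\nabla_\xi R_\xi=0$, force $A\mathfrak D^\perp\subset\mathfrak D^\perp$, invoke Berndt--Suh, eliminate type~(B), then read off the radius from $\alpha\neq0$) is exactly the template used in the original source \cite{jks} and in many papers of this genre. However, your outline is only a plan, not a proof: the sentence ``I expect an equation whose leading coefficient is a nonzero multiple of $\alpha$'' is the crux and is left entirely unjustified, and the elimination of type~(B) (``incompatible with Reeb parallelism once $\alpha\neq0$'') is asserted rather than computed. If you want to reconstruct the proof you would need to carry out those two computations explicitly, and for that you should consult \cite{jks} directly rather than the present paper.
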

We say that a real hypersurface $M$ has commuting structure Jacobi operator if it commutes with any other Jacobi operator defined on $M$, that is, $R_\xi \cdot R_X = R_X \cdot R_\xi$ for any $X$ tangent to $M$. Machado,  et al. proved the non-existence of Hopf real hypersurface in $SU_{m+2}/S(U_2U_m)$,  $m \ge 3$,  with commuting structure Jacobi operator under certain conditions \cite{mps}. They also classified real hypersurfaces in 
$SU_{m+2}/S(U_2U_m)$,    $m \ge 3$,
 with $\hat R_N\cdot R_\xi=R_\xi\cdot \hat R_N$. 
%The structure Jacobi operator is said to be recurrent if $(\nabla _XR_\xi)Y =\omega (X)R_\xi Y$, for any two vector fields $X,Y$ on M where $\omega$ is certain 1-form. Clearly, a vanishing $\omega$ leads to parallel structure Jacobi operator. 
In \cite{recurrent}, 
Jeong, et al. proved the following:
\begin{theorem}[\cite{recurrent}]
There does not exist any Hopf hypersurface in 
$SU_{m+2}/S(U_2U_m)$,  $m \ge 3$, with recurrent structure Jacobi operator if the distribution $\mathfrak D$ or 
$\mathfrak D^\perp$-component of the Reeb vector field is invariant under the shape operator.		
\end{theorem}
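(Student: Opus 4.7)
The plan is to convert the recurrence condition into a semi-parallel-type restriction and then use the $A$-invariance hypothesis to rule out all compatible Hopf configurations. From $\nabla_X R_\xi = \omega(X) R_\xi$ the Ricci identity applied to the $(1,1)$-tensor $R_\xi$ yields
\[
R(X,Y)\cdot R_\xi = d\omega(X,Y)\,R_\xi,
\]
so every $R(X,Y)$ preserves the eigenspace decomposition of $R_\xi$ up to scaling. Together with $A\xi=\alpha\xi$ and the Gauss equation, this gives the block form
\[
R_\xi(Y) = \hat R(Y,\xi)\xi + \alpha\,AY - \alpha^{2}\, g(Y,\xi)\,\xi,
\]
where $\hat R(Y,\xi)\xi$ is a universal expression in $J$, $\{J_1,J_2,J_3\}$ and the projections of $\xi$ onto $\mathfrak D$ and $\mathfrak D^\perp$.

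Next, I would write $\xi = \eta+\eta^\perp$ with $\eta\in\mathfrak D$ and $\eta^\perp\in\mathfrak D^\perp$. Assuming, without loss of generality, that $A\eta^\perp\in\mathfrak D^\perp$, the Hopf condition forces $A\eta\in\mathfrak D$, so each summand is an $A$-invariant direction. Substituting this decomposition into the expression for $R_\xi$ and applying the recurrence equation to test vectors in $\mathfrak D\cap D$, $\mathfrak D\cap D^\perp$ and $\mathfrak D^\perp$, together with the Codazzi equation and the standard identity $\nabla_X J_a = q_b(X) J_c - q_c(X) J_b$ (cyclic on $(a,b,c)$), produces a system of algebraic identities on the principal curvatures of $M$. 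After using this system to propagate the pointwise invariance from $\eta^\perp$ to the entire distribution $\mathfrak D^\perp$, one lands in the setting of the Berndt--Suh classification (tubes of type (A) and (B)), and a case-by-case check on each model shows that $R_\xi$ cannot be recurrent.

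The main obstacle will be the algebraic bookkeeping in expanding $\nabla\bigl(\hat R(\,\cdot\,,\xi)\xi\bigr)$, because the simultaneous presence of the K\"ahler structure $J$ and the quaternionic structure $\{J_a\}$ in $\hat R$ requires careful tracking of the connection $1$-forms $q_b$, and many cancellations only emerge after projecting onto $\mathfrak D\cap D$, $\mathfrak D\cap D^\perp$ and $\mathfrak D^\perp$ separately. A secondary difficulty is that the $A$-invariance hypothesis is imposed only on components of $\xi$, not on the full distributions $\mathfrak D$ or $\mathfrak D^\perp$; one must first leverage the recurrence equation to upgrade this pointwise invariance into invariance of an entire distribution before the Berndt--Suh list becomes directly applicable.
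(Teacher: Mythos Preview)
Your first step---deriving $R(X,Y)\cdot R_\xi = d\omega(X,Y)\,R_\xi$ from recurrence via the Ricci identity---is exactly right, and it is also where the paper begins. But you stop one move too early. Evaluate this identity on the vector $\xi$: since $R_\xi\xi = 0$, the right-hand side vanishes, while with $Y=\xi$ the left-hand side gives
\[
R(X,\xi)R_\xi\xi - R_\xi R(X,\xi)\xi \;=\; -R_\xi^{2} X.
\]
Hence $R_\xi^{2}=0$, and because $R_\xi$ is self-adjoint this forces $R_\xi=0$ identically. This is the content of the paper's Theorem~\ref{T1} (valid on any almost contact metric manifold), and it removes in one stroke everything you flag as the main obstacle: there is no eigenspace decomposition of $R_\xi$ left to preserve, no expansion of $\nabla\bigl(\hat R(\cdot,\xi)\xi\bigr)$ to carry out, no connection $1$-forms $q_b$ to track, and no appeal to the Berndt--Suh classification.

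The paper then shows directly (Section~3) that $R_\xi=0$ is impossible on any real hypersurface in $\hat M^m(c)$, $m\ge 3$, without assuming $M$ is Hopf and without the $A$-invariance hypothesis on the $\mathfrak D$- or $\mathfrak D^\perp$-component of $\xi$. So the cited theorem you set out to prove is strictly weaker than what the paper establishes, and the additional hypotheses you intend to exploit are never used. Your outline is plausibly the route taken in the original reference, but compared with the present paper it works much harder than necessary: the single observation $R_\xi\xi=0$ collapses recurrence (indeed semi-parallelism) to the vanishing of $R_\xi$, after which the problem becomes a purely algebraic analysis of the Gauss and Codazzi equations.
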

On the other hand,  under certain restrictions, 
Hwang, et al.  obtained the  following  non-existence result.
\begin{theorem}[\cite{hwang-lee-woo}]
There  does  not  exist  any Hopf  hypersurface in $SU_{m+2}/S(U_2U_m)$,  $m \ge 3$, with semi-parallel structure Jacobi operator if the smooth function $\alpha=g(A\xi, \xi)$ is constant along the direction of $\xi$.	
\end{theorem}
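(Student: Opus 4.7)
The plan is to combine the Hopf condition $A\xi=\alpha\xi$ with the Gauss equation and the explicit form of the curvature tensor of $\hat M^m(c)$ to obtain a workable expression
\[
R_\xi X = c\,T(X) + \alpha AX - \alpha^2 g(X,\xi)\xi,
\]
where $T(X)$ is a tensor polynomial in $X$, $\xi$, $\xi_1,\xi_2,\xi_3$ and the complex structure $J$, coming entirely from $\hat R(X,\xi)\xi$. Two immediate consequences are $R_\xi\xi = 0$ and that the action of $R_\xi$ on $TM\ominus\mathbb R\xi$ depends only on $\alpha$, $c$, $A$, and the decomposition $\xi = Z_0+W_0$ with $Z_0\in\mathfrak D$ and $W_0\in\mathfrak D^\perp$.

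The semi-parallel condition $R\cdot R_\xi = 0$ is equivalent to $R(X,Y)$ commuting with $R_\xi$ as an endomorphism of $TM$ for every $X,Y \in TM$. I would feed this identity three types of test pairs: (i) $Y=\xi$, which by the Gauss equation reduces the commutator to an $\alpha$-dependent polynomial identity in the principal curvatures of $A$; (ii) both $X,Y$ in $\mathfrak D^\perp$, which isolates the quaternionic contribution of $\mathfrak J$; and (iii) $X,Y$ chosen as common principal vectors of $A$ and of $R_\xi$, which tightens the spectral data. In parallel, the Codazzi equation for a Hopf hypersurface in $\hat M^m(c)$ yields an expression for $X(\alpha)$ in terms of $\xi(\alpha)$, $g(X,\xi_a)$ and $g(X,J\xi_a)$; under the standing hypothesis $\xi(\alpha) = 0$ this collapses to a pointwise algebraic constraint on $A$ and on the $\mathfrak D^\perp$-components of $\xi$.

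I would then close by splitting on the position of $\xi$: namely $\xi\in\mathfrak D$, $\xi\in\mathfrak D^\perp$, and the generic case $\xi\notin \mathfrak D\cup\mathfrak D^\perp$. In each situation the relations from steps (i)--(iii), combined with the reduced Codazzi identity, overdetermine the principal-curvature data and force a contradiction. The main obstacle is the generic case, where the angle between $\xi$ and $\mathfrak D^\perp$ may a priori vary across $M$; one must track the interaction of $J$ with the three-dimensional family $\{J_1,J_2,J_3\}$ inside the commutator $[R(X,Y),R_\xi]$ and isolate an overdetermined subsystem among the resulting equations. The hypothesis $\xi(\alpha) = 0$ enters precisely here, killing the $\xi$-derivative that would otherwise permit a nontrivial radial flow of the principal-curvature data, and thereby reducing the problem to a finite algebraic incompatibility that rules out any such hypersurface.
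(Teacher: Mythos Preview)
The paper does not give a separate proof of this cited result from \cite{hwang-lee-woo}; it subsumes it under the stronger Theorem~\ref{T2}. The decisive step, which your outline misses, is the paper's Theorem~\ref{T1}: on any almost contact metric manifold, $R_\xi$ is semi-parallel if and only if $R_\xi=0$. The proof is two lines --- substitute $Y=Z=\xi$ into $(R(X,Y)\cdot R_\xi)Z=0$; since $R_\xi\xi=0$ this yields $R_\xi^2X=0$, and self-adjointness of $R_\xi$ forces $R_\xi=0$.

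Your test pair (i), taking $Y=\xi$, is one move away from this, but you describe its output only as ``an $\alpha$-dependent polynomial identity in the principal curvatures of $A$'' rather than also setting $Z=\xi$ and recognizing that the identity kills $R_\xi$ outright. Once $R_\xi=0$ is known, the problem becomes the purely algebraic analysis of (\ref{eqn:10}) set to zero, and your program of commutators $[R(X,Y),R_\xi]$, case-splitting on the angle between $\xi$ and $\mathfrak D^\perp$, and tracking $J$ against $\{J_1,J_2,J_3\}$ is bypassed. The hypothesis $\xi\alpha=0$ plays no role in the reduction to $R_\xi=0$, which is exactly why the paper can drop both it and the Hopf assumption.

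As it stands, your proposal is a strategy rather than a proof: phrases like ``overdetermine the principal-curvature data and force a contradiction'' and ``isolate an overdetermined subsystem'' are promises, not arguments. The generic case you flag as the main obstacle is genuinely delicate even after the reduction --- the paper's Section~3 spends several pages showing $R_\xi\not\equiv0$ --- and without Theorem~\ref{T1} it is not clear your commutator relations alone would close.
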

Motivated from the above studies, a natural question arises:
\begin{problem}
Does there exist real hypersurface in $\hat M^m(c)$ with parallel, recurrent or semi-parallel structure Jacobi operator? 
%And can we prove this without imposing any additional condition.
\end{problem}
In the present paper we first prove the following:

\begin{theorem}\label{T2}
There does not exist any connected real hypersurface in $\hat M^m(c)$, $m\geq3$, with semi-parallel structure Jacobi operator.
\end{theorem}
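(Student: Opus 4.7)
The plan is to assume such a hypersurface $M$ exists and derive a contradiction in two stages: first, establish that $M$ is necessarily Hopf; then rule out every possibility compatible with the Berndt--Suh classification. Semi-parallelism of the $(1,1)$-tensor $R_\xi$ amounts to the commutation relation $R(X,Y)R_\xi Z = R_\xi R(X,Y)Z$ for all tangent $X,Y,Z$. Substituting the Gauss equation
\[
R(X,Y)Z = \hat R(X,Y)Z + g(AY,Z)AX - g(AX,Z)AY
\]
together with the explicit curvature formula of $\hat M^m(c)$, which is a linear combination of the standard algebraic expressions built from $g$, $J$, and the canonical basis $\{J_1,J_2,J_3\}$ of $\mathfrak J$, yields a polynomial identity in the shape operator $A$ and in the structure vector fields $\xi, \xi_1, \xi_2, \xi_3$.

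To reduce to the Hopf case, write $A\xi = \alpha\xi + \beta U$ with $\alpha = g(A\xi,\xi)$, $U$ a unit vector in $\xi^\perp$, and $\beta \geq 0$; the goal is to force $\beta = 0$. The strategy is to test the identity above on carefully chosen triples $(X,Y,Z)$ drawn from $\mathbb R\xi$, $\mathfrak D^\perp$, and the complementary distribution $\mathfrak D$, exploiting the fact that the ambient curvature $\hat R$ has distinguished K\"ahler and quaternionic contributions. Collecting the components of $(R(\xi,X)\cdot R_\xi)\xi$ along $U$ and along directions transverse to $\mathfrak D^\perp$ should produce an equation of the form $\beta\, p(\alpha,\beta,c)=0$ with $p$ a nonvanishing expression, where the assumption $m\geq 3$ provides enough room in the complement of $\mathfrak D^\perp$ for the test vectors. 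This forces $\beta = 0$, so $A\xi = \alpha\xi$.

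Once $M$ is known to be Hopf, the standard Berndt--Suh identities for $\hat M^m(c)$ apply: a formula for $\xi\alpha$, the behaviour of $A$ on the eigenspaces associated with $\mathfrak D^\perp$, and the way the tangential part of $J$ interacts with the shape operator. Feeding these into the semi-parallel identity restricted to elements of $\mathfrak D$ produces polynomial constraints on the principal curvatures and on $\alpha$. One may either proceed case by case through the Berndt--Suh classification (in each model the principal curvatures are explicit functions of a tube radius and the constraints are incompatible), or, when $\xi\alpha = 0$, appeal directly to the Hwang--Lee--Woo theorem recalled in the introduction; the remaining range $\xi\alpha \neq 0$ only strengthens the obstruction, since additional derivative terms then enter the structure equations.

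The principal difficulty is the Hopf reduction. All earlier semi-parallel and related non-existence results cited in the introduction assume $M$ is Hopf \emph{a priori}, and the bulk of the technical machinery available for real hypersurfaces in $\hat M^m(c)$ is developed under that assumption. Extracting the Hopf property from semi-parallelism alone requires precise bookkeeping of how the K\"ahler and quaternionic contributions of $\hat R$ recombine with the shape-operator terms from Gauss when $A\xi$ has a nontrivial $\xi^\perp$-component, and both the rank-two structure of $\hat M^m(c)$ and the dimension hypothesis $m\geq 3$ will have to be used fully to close off the polynomial obstruction in $\beta$.
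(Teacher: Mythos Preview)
Your plan overlooks the decisive elementary observation that collapses the whole problem. Setting $Y=Z=\xi$ in the semi-parallel identity $R(X,Y)R_\xi Z=R_\xi R(X,Y)Z$ gives $R_\xi^2X=R_\xi R(X,\xi)\xi=R(X,\xi)R_\xi\xi=0$; since $R_\xi$ is self-adjoint this forces $R_\xi\equiv 0$. This is the paper's Theorem~\ref{T1}, and it converts the semi-parallel hypothesis into the pointwise algebraic condition $R_\xi=0$, from which the paper works entirely. No further use of the derivation property of $R$ is needed.

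With that reduction in hand, your two-stage outline becomes both unnecessary and, as written, incorrect. First, the Hopf reduction you sketch is not what actually happens: under $R_\xi=0$ the paper proves (Claim~\ref{claim:B-II-a}) that the hypersurface is \emph{not} Hopf, so the polynomial obstruction you hope to extract cannot force $\beta=0$ unless one has already secretly derived the final contradiction. Second, even if you reached the Hopf stage, the Berndt--Suh classification requires the additional invariance $A\mathfrak D^\perp\subset\mathfrak D^\perp$, which you do not arrange; and your fallback to Hwang--Lee--Woo covers only the compact case with $\xi\alpha=0$, while the assertion that the case $\xi\alpha\neq0$ ``only strengthens the obstruction'' is not a proof. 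The paper instead analyses $R_\xi=0$ directly via (\ref{eqn:10}): it shows $\xi\in\mathfrak D^\perp$ everywhere, produces the explicit form (\ref{eqn:B-220}) of $A$ on an adapted splitting, and closes with a short Codazzi computation yielding $\beta^2=-3\alpha^2$.
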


The non-existence of real hypersurfaces with semi-parallel structure Jacobi operator in a non flat complex space form has been proved in  \cite{cho-kimura, ivey-ryan}.
We also remark that Theorem~\ref{T2} holds for non-Hopf real hypersurfaces as well and no further conditions are imposed. 
By a result  in \cite{de-loo}, we learn that if a tensor field is recurrent, it is always semi-parallel. Hence, as a corollary we obtain the following:
\begin{corollary}
	There does not exist any real hypersurface in $\hat M^m(c)$, $m\geq3$, with parallel or recurrent structure Jacobi operator.
\end{corollary}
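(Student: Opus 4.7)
The plan is to reduce the proof to the non-existence of real hypersurfaces with vanishing structure Jacobi operator. The crucial observation is that semi-parallelism of $R_\xi$ already forces $R_\xi \equiv 0$; the bulk of the argument then goes into excluding the case $R_\xi = 0$.

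For the reduction, I would start from the semi-parallel identity
\[
R(X,Y)(R_\xi Z)-R_\xi(R(X,Y)Z)=0
\]
and evaluate at $Z=\xi$. Since $R_\xi\xi=R(\xi,\xi)\xi=0$, the first term vanishes and we obtain $R_\xi(R(X,Y)\xi)=0$ for all $X,Y\in TM$. Specializing further to $Y=\xi$ yields $R_\xi(R(X,\xi)\xi)=R_\xi^2 X=0$ for every $X$, so $R_\xi^2=0$. The standard curvature symmetries together with the antisymmetries of $R$ make $R_\xi$ self-adjoint, and hence $\|R_\xi X\|^2=g(R_\xi^2 X,X)=0$, which gives $R_\xi\equiv 0$ on $M$.

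What remains is to show that no real hypersurface in $\hat M^m(c)$ with $m\geq 3$ can have $R_\xi=0$. The Gauss equation expresses
\[
R_\xi X=\hat R(X,\xi)\xi+g(A\xi,\xi)\,AX-g(AX,\xi)\,A\xi,
\]
while the explicit formula for the curvature of $\hat M^m(c)$ gives $\hat R(X,\xi)\xi$ as a $c$-weighted combination of $X$, $\xi$, $JX$, $J_\nu X$, $JJ_\nu X$, and $\xi_\nu$, with coefficients involving $\eta(X):=g(X,\xi)$, $g(X,\xi_\nu)$, and $g(\xi,\xi_\nu)$. Setting $R_\xi\equiv 0$ in this identity produces pointwise constraints on the shape operator $A$ and on the position of $\xi$ relative to the quaternionic distribution $\mathfrak D^\perp$. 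Testing the identity against carefully chosen $X$ (for instance $X=\xi_\nu$, $X=J\xi_\nu$, or $X$ orthogonal to $\xi$, $A\xi$, $\xi_1,\xi_2,\xi_3$ and their $J$-images) and pairing the resulting identities with the Codazzi equation should drive the argument to a contradiction. In the non-Hopf case, decomposing $A\xi=\alpha\xi+\beta U$ with $U$ a unit vector orthogonal to $\xi$ lets one read off $A$ explicitly on the bulk of the tangent space from $R_\xi=0$; in the Hopf case, the Berndt--Suh invariance $A\mathfrak D^\perp\subset\mathfrak D^\perp$ further narrows the possibilities.

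The principal obstacle is precisely this second step. The curvature of $\hat M^m(c)$ couples the K\"ahler structure $J$ with the quaternionic structure $\mathfrak J$ through the terms $JJ_\nu$, and the tangential components of $J_\nu\xi$ depend on the a priori unknown numbers $g(\xi,\xi_\nu)$. Consequently the argument must be uniform in the position of $\xi$ and will likely split into subcases according to how many of the coefficients $g(\xi,\xi_\nu)$ vanish. The assumption $m\geq 3$ is expected to enter exactly here, by providing enough room outside the span of $\xi$, $A\xi$, $\xi_1,\xi_2,\xi_3$ and their $J$-images to choose test vectors that overdetermine the resulting algebraic system and yield the desired contradiction.
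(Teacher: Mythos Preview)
Your proposal conflates the Corollary with Theorem~\ref{T2}. In the paper, the Corollary is a one-line consequence of Theorem~\ref{T2}: parallel $\Rightarrow$ recurrent (take $\omega=0$) and recurrent $\Rightarrow$ semi-parallel (a general fact for tensor fields on Riemannian manifolds, cited from \cite{de-loo}); then Theorem~\ref{T2} finishes. You never supply this reduction. You open with ``the semi-parallel identity'' and proceed as if semi-parallelism were the hypothesis, but the statement you are asked to prove assumes only that $R_\xi$ is parallel or recurrent. For the parallel case this is harmless; for the recurrent case it is not automatic: from $\nabla_XR_\xi=\omega(X)R_\xi$ one gets $R(X,Y)\cdot R_\xi=d\omega(X,Y)\,R_\xi$, and a further argument (for instance, $\omega=\tfrac12\,d\log\|R_\xi\|^2$ on the open set where $R_\xi\neq0$, whence $d\omega=0$ there, while $R\cdot R_\xi=0$ trivially where $R_\xi=0$) is needed before your semi-parallel computation applies.

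Everything after your first paragraph is really a sketch of the proof of Theorem~\ref{T2} itself, not of the Corollary. Your derivation of $R_\xi^2=0\Rightarrow R_\xi=0$ matches the paper's Theorem~\ref{T1}. But the remaining step, ``$R_\xi=0$ leads to a contradiction'', is the entire content of Section~3, and your outline (``testing the identity against carefully chosen $X$ \dots should drive the argument to a contradiction'') is not a proof. The paper's argument passes through several non-obvious claims in sequence: $\xi\notin\mathfrak D$ on a dense set, then $\dim\mathcal H\geq8$, then $\|\xi^\perp\|=1$ everywhere (forcing $\xi\in\mathfrak D^\perp$), then $\alpha+\lambda_{-1}=0$ with $\alpha^2=2c$, and finally a contradiction $\beta^2=-3\alpha^2$ obtained by differentiating $\beta$ via the Codazzi equation. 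None of these intermediate obstructions is identified in your sketch, and the phrases ``should drive'' and ``is expected to enter'' signal that the hard part has not actually been carried out.
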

%Combining we state our main theorem as follows: 
%\begin{theorem}\label{T3}
%There does not exist any real hypersurface in complex two plane Grassmannians with vanishing, parallel, recurrent or semi-parallel structure Jacobi operator.
%\end{theorem}

%%%%%%%%%%%%%%%%%%%%%%%%%%%%%%%%%%%%%%%%%%%
%%%%%%%%%%%%%%%%%%%%%%%%%%%%%%%%%%%%%%%%%%%
\section{Preliminaries}
In this section, we recall some  fundamental identities for real hypersurfaces in  complex Grassmannian of rank two,
which  have been proven in 
\cite{berndt-suh, berndt-suh2, %hwang-lee-woo, 
lee-loo, looth}. %niebergall-ryan, perez-santos
 %,  suh1}.

Let $M$ be a connected, oriented real hypersurface isometrically immersed in $\hat M^m(c)$, $m\geq  3$.
 The almost contact metric 3-structure $(\phi_a,\xi_a,\eta_a,g)$ on $M$ is given by
$$
J_a X=\phi_a X+\eta_a (X) N,\quad\quad J_a N=-\xi_a,\quad\quad \eta_a(X)=g(X,\xi_a),
$$
for any $X\in TM$, where  $\{J_1,J_2,J_3\}$ is a  canonical local basis  of $\mathfrak J$ on $\hat M^m(c)$.
It follows that
\begin{align*}%\label{eqn:3-contact}
%\left.\begin{aligned}
\phi_a\phi_{a+1}-\xi_a\otimes\eta_{a+1}=\phi_{a+2}%=-\phi_{a+1}\phi_a+\xi_{a+1}\otimes\eta_a
\\
\phi_a\xi_{a+1}=\xi_{a+2}=-\phi_{a+1}\xi_a  %\quad \phi_a\xi_{a+2}=-\xi_{a+1}=-\phi_{a+2}\xi_a	\\
%\end{aligned}%\right\}
\end{align*}
for $a\in\{1,2,3\}$. The indices in the preceding equations are taken modulo three.

The  K\"ahler structure $J$ induces on $M$ an almost contact metric structure $(\phi,\xi,\eta,g)$, namely,
\begin{align*}	%\label{f-structure}
JX=\phi X+\eta(X)N,	\quad JN=-\xi, \quad \eta(X)=g(X,\xi).
\end{align*}
Let $\mathfrak D^\perp=\mathfrak JTM^\perp$, and $\mathfrak D$ its orthogonal complement in $TM$.
%If $\xi\in\mathfrak D$, then $\eta(\xi_a)=0$ for $a\in\{1,2,3\}$ and so by the preceding equation, we obtain
We define a local  $(1,1)$-tensor field $\theta_a$ on $M$  by
\[
\theta_a :=\phi_a\phi -\xi_a\otimes\eta.
\]
Denote by $\nabla$ the Levi-Civita connection on $M$. Then there exist local $1$-forms $q_a$, $a\in\{1,2,3\}$ such that
\begin{eqnarray}\label{eqn:contact}
\left.\begin{aligned}
& \nabla_X \xi = \phi AX \\%\label{eqn:delxi}\\
&\nabla_X \xi_a = \phi_a AX+q_{a+2}(X)\xi_{a+1}-q_{a+1}(X)\xi_{a+2} \\
&\nabla_X\phi\xi_a
=\theta_aAX+\eta_a(\xi)AX+q_{a+2}(X)\phi\xi_{a+1} - q_{a+1}(X)\phi\xi_{a+2}. %\label{eqn:del_phi_xi-a}
\end{aligned}\right\}
\end{eqnarray}
The following identities are known.
\begin{lemma}[\cite{lee-loo}]
\label{lem:theta}
%Let $M$ be a real hypersurface in $\hat M^m(c)$. Then
\begin{enumerate}
\item[(a)] $\theta_a$ is symmetric,
\item[(b)] $\phi\xi_a=\phi_a\xi$,% $\trace{(\theta_a)}=\eta(\xi_a)$,
\item[(c)] $\theta_a\xi=-\xi_a; \quad \theta_a\xi_a=-\xi; \quad \theta_a\phi\xi_a=\eta(\xi_a)\phi\xi_a$,
\item[(d)]  $\theta_a\xi_{a+1}= \phi\xi_{a+2}=-\theta_{a+1}\xi_a$,
\item[(e)] % $\theta_a^2-\phi\xi_a\otimes\eta_a\phi=\mathbb I$,
%\item[(f)]
$-\theta_a\phi\xi_{a+1}+\eta(\xi_{a+1})\phi\xi_a =\xi_{a+2}=\theta_{a+1}\phi\xi_a-\eta(\xi_a)\phi\xi_{a+1}$.
\end{enumerate}
\end{lemma}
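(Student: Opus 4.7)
The plan is to extract tangential (and, where needed, normal) components from the structural identities satisfied by $J$ and the $J_a$ on the ambient bundle of $\hat M^m(c)$: $J^2=-I$, $J_a^2=-I$, the quaternionic triple product $J_aJ_{a+1}=J_{a+2}=-J_{a+1}J_a$, and the commutation $JJ_a=J_aJ$, which is a standard compatibility feature of the K\"ahler and quaternionic K\"ahler structures on complex two-plane Grassmannians. Applying $JJ_a=J_aJ$ to $N$ gives $J\xi_a=J_a\xi$; its tangential part is precisely (b). Applying it to an arbitrary $X\in TM$ and comparing tangential parts produces the companion formula
\[
\phi_a\phi X-\eta(X)\xi_a=\phi\phi_aX-\eta_a(X)\xi,
\]
so $\theta_a$ admits the equivalent dual expression $\theta_aX=\phi\phi_aX-\eta_a(X)\xi$. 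In parallel, $J^2=-I$ and $J_a^2=-I$ descend to $\phi^2=-I+\eta\otimes\xi$ and $\phi_a^2=-I+\eta_a\otimes\xi_a$, while the quaternionic identities descend to $\phi_a\phi_{a+1}=\phi_{a+2}+\xi_a\otimes\eta_{a+1}$ and $\phi_{a+1}\phi_a=-\phi_{a+2}+\xi_{a+1}\otimes\eta_a$. Together with (b) and $\phi\xi=0$, $\eta(\xi)=1$, these form a pocket toolkit that settles everything.

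For (a), write $g(\theta_aX,Y)=-g(\phi X,\phi_aY)-\eta(X)\eta_a(Y)$ using skew-symmetry of $\phi_a$, push $\phi$ across to obtain $g(X,\phi\phi_aY)-\eta(X)\eta_a(Y)$, and substitute the dual form $\phi\phi_aY=\theta_aY+\eta_a(Y)\xi$; the unwanted terms cancel and symmetry drops out. For (c)--(e), each equality is a direct substitution followed by a short simplification. Representative samples: $\theta_a\xi=\phi_a\phi\xi-\xi_a=-\xi_a$; $\theta_a\xi_a=\phi_a\phi\xi_a-\eta(\xi_a)\xi_a=\phi_a^2\xi-\eta(\xi_a)\xi_a=-\xi$; and $\theta_a\phi\xi_a=\phi_a\phi^2\xi_a=\phi_a(-\xi_a+\eta(\xi_a)\xi)=\eta(\xi_a)\phi\xi_a$, using $\phi_a\xi_a=0$ and (b). Identity (d) comes from $\theta_a\xi_{a+1}=\phi_a\phi_{a+1}\xi-\eta(\xi_{a+1})\xi_a=\phi_{a+2}\xi=\phi\xi_{a+2}$, the $\eta(\xi_{a+1})\xi_a$ contributions cancelling against each other; the companion equality uses the sign-flipped analogue for $\phi_{a+1}\phi_a$. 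Identity (e) follows from $\theta_a\phi\xi_{a+1}=\phi_a\phi^2\xi_{a+1}=-\phi_a\xi_{a+1}+\eta(\xi_{a+1})\phi_a\xi=-\xi_{a+2}+\eta(\xi_{a+1})\phi\xi_a$, and the index-swapped twin gives the second half.

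No genuine conceptual obstacle is present: the work is disciplined bookkeeping in the cyclic index $a\mapsto a+1\mapsto a+2$ and consistent use of (b) to interchange $\phi\xi_a$ with $\phi_a\xi$. The one spot that needs a moment of care is the third part of (c), where one must notice upfront that $\eta(\phi\xi_a)=g(\phi\xi_a,\xi)=-g(\xi_a,\phi\xi)=0$ and that $\phi_a\xi_a=0$ in order to collapse $\theta_a\phi\xi_a$ to $\eta(\xi_a)\phi\xi_a$ in two lines.
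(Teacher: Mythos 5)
Your derivation is correct: every identity in (a)--(e) follows exactly as you compute from the tangential parts of $J^2=J_a^2=-I$, $J_aJ_{a+1}=J_{a+2}=-J_{a+1}J_a$, and the commutation $JJ_a=J_aJ$ (a standard property of the ambient Grassmannian), the only nontrivial ingredient being the dual expression $\theta_aX=\phi\phi_aX-\eta_a(X)\xi$, which you obtain correctly. The paper itself gives no proof --- the lemma is imported from the cited reference --- and your argument is precisely the standard computation one finds there, so there is nothing to compare beyond confirming that your bookkeeping (including the cancellations in (c), (d), (e) and the observation $\eta(\phi\xi_a)=0$) all checks out.
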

\begin{lemma}[\cite{lee-loo}]\label{lem:Aphixi_a}
%Let $M$ be a real hypersurface in $\hat M^m(c)$.
If $\xi\in\mathfrak D$ everywhere, then $A\phi\xi_a=0$ for $a\in\{1,2,3\}$.
\end{lemma}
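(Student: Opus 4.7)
The plan is to derive the conclusion purely from the hypothesis $\xi\in\mathfrak D$, which is equivalent to $\eta_a(\xi)=g(\xi,\xi_a)=0$ for every $a\in\{1,2,3\}$, by differentiating this identity and using the structure equations in (2.1).

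First I would fix $a$ and an arbitrary tangent vector field $X$, and differentiate the scalar identity $g(\xi,\xi_a)=0$ in the direction $X$, obtaining
\[
0=g(\nabla_X\xi,\xi_a)+g(\xi,\nabla_X\xi_a).
\]
Substituting $\nabla_X\xi=\phi AX$ and $\nabla_X\xi_a=\phi_aAX+q_{a+2}(X)\xi_{a+1}-q_{a+1}(X)\xi_{a+2}$ from (2.1), and using once more that $\xi\in\mathfrak D$ to kill the $q$-terms (since $g(\xi,\xi_{a\pm1})=0$), the identity reduces to
\[
g(\phi AX,\xi_a)+g(\xi,\phi_aAX)=0.
\]

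Next I would convert both terms into pairings with $A\phi\xi_a$. Using skew-adjointness of $\phi$ and symmetry of $A$, the first term becomes $-g(AX,\phi\xi_a)=-g(X,A\phi\xi_a)$. For the second term, Lemma~\ref{lem:theta}(b) gives $\phi_a\xi=\phi\xi_a$, and then skew-adjointness of $\phi_a$ combined with symmetry of $A$ turns it into $-g(\phi_a\xi,AX)=-g(\phi\xi_a,AX)=-g(X,A\phi\xi_a)$. Adding these shows $-2g(A\phi\xi_a,X)=0$ for every $X$, which forces $A\phi\xi_a=0$.

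This argument is purely algebraic once the right-hand substitutions are made, so there is no serious obstacle; the only point requiring care is the bookkeeping that lets one conclude the two surviving terms are equal rather than cancel, which rests on the compatibility $\phi\xi_a=\phi_a\xi$ from Lemma~\ref{lem:theta}(b).
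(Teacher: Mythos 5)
Your proof is correct: differentiating the identity $\eta_a(\xi)=g(\xi,\xi_a)=0$ (valid on an open set since $\xi\in\mathfrak D$ \emph{everywhere}), substituting the covariant derivatives from (2.1), killing the $q$-terms with $g(\xi,\xi_{a\pm1})=0$, and using $\phi_a\xi=\phi\xi_a$ from Lemma~\ref{lem:theta}(b) does yield $-2g(A\phi\xi_a,X)=0$ for all $X$, hence $A\phi\xi_a=0$. The paper only cites this lemma from \cite{lee-loo} without reproducing a proof, and your argument is exactly the standard derivation used there, so there is nothing to add.
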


For each $x\in M$,   we define a subspace $\mathcal H^\perp$ of $T_xM$ by
$$\mathcal H^\perp: =\mathrm{span}\{\xi,\xi_1,\xi_2,\xi_3,\phi\xi_1,\phi\xi_2,\phi\xi_3\}.$$
Let $\mathcal{H}$ be the orthogonal complement of $\mathcal {H}^\perp$ in $T_xM$.
Then
$\dim\mathcal H=4m-4$  (resp. $\dim\mathcal H=4m-8$) when $\xi\in\mathfrak D^\perp$ (reps. $\xi\notin\mathfrak D^\perp$).
%and $\mathcal{H}$ is invariant under $\phi,\phi_a$ and $\theta_a$.
Moreover, $\theta_{a|\mathcal{H}}$ has two eigenvalues: $1$ and $-1$.
Denote by  $\mathcal H_a(\varepsilon)$ the eigenspace corresponding to the eigenvalue $\varepsilon$ of
${\theta_a}_{|\mathcal H}$.
Then $\dim \mathcal H_a(1)=\dim \mathcal H_a(-1)$ is even, and
\begin{align*}
%\left.
\begin{aligned}%\label{eqn:Ha}
&\phi\mathcal H_a(\varepsilon)=\phi_a\mathcal H_a(\varepsilon)=\theta_a\mathcal H_a(\varepsilon)=\mathcal H_a(\varepsilon) \\
&\phi_b\mathcal H_a(\varepsilon)=\theta_b\mathcal H_a(\varepsilon)=\mathcal H_a(-\varepsilon), \quad (a\neq b).
\end{aligned}%\right\}
\end{align*}

We define the tensor fields $\theta$, $\phi^\perp$, $\xi^\perp$ and $\eta^\perp$  on $M$ as follows
\begin{align*}
\theta:=&\sum^3_{a=1}\eta_a(\xi)\theta_a, \quad \phi^\perp:=\sum^3_{a=1}\eta_a(\xi)\phi_a, \quad
\xi^\perp:=\sum^3_{a=1}\eta_a(\xi)\xi_a, \quad \eta^\perp:=\sum^3_{a=1}\eta_a(\xi)\eta_a.
\end{align*}
Then for each  $x\in M$ with $\xi^\perp\neq0$,  $\theta_{|\mathcal H}$ has two eigenvalues $\varepsilon||\xi^\perp||$, $\varepsilon\in\{1,-1\}$.
Let $\mathcal H(\varepsilon)$ be the eigenspace of $\theta_{|\mathcal H}$ corresponding to  $\varepsilon||\xi^\perp||$. Then
\begin{enumerate}
\item[(a)] $\phi \mathcal H(\varepsilon)=\phi^\perp \mathcal H(\varepsilon)= \mathcal H(\varepsilon)$,
\item[(b)] $\dim \mathcal H(1)=\dim \mathcal H(-1)$ is even.
\end{enumerate}
Moreover,
we can take a canonical local basis of $\mathfrak J$ on a neighborhood $G\subset M$ of
such a point $x$ such that 
\begin{align*}%\label{eqn:natural-basis}
%\left.\begin{aligned}
&\xi_1=\frac{\xi^\perp}{||\xi^\perp||}, \quad  0<\eta_1(\xi)=||\xi^\perp||\leq 1, \quad \eta_2(\xi)=\eta_3(\xi)=0\\
&\mathcal H(\varepsilon)=\mathcal H_1(\varepsilon),  \quad \theta=\eta_1(\xi)\theta_1,   \quad \phi^\perp=\eta_1(\xi)\phi_1,
				\quad \eta^\perp=\eta_1(\xi)\eta_1.
%\end{aligned}\right\}
\end{align*}
In particular,  if $||\xi^\perp||=1$ at  $x$, then
\begin{align*}%\label{eqn:natural-basis2}
&\xi_1=\xi=\xi^\perp, \quad  \xi_2=\theta\xi_2=\phi\xi_3, \quad \xi_3=\theta\xi_3=-\phi\xi_2.
\end{align*}
Throughout this paper, we always consider such a local orthonormal frame $\{\xi_1,\xi_2,\xi_3\}$ on $\mathfrak D^\perp$ under these situations.

A straightforward calculation gives
\begin{align}\label{eqn:global}
(\nabla_X\theta)Y=\eta^\perp(\phi Y)AX-g(AX,Y)\phi\xi^\perp
															+2\sum^3_{a=1}\eta_a(\phi AX)\theta_aY.
\end{align}
The equations of Gauss and Codazzi are respectively given by
$$\begin{aligned}
R(X,Y)Z=&g( AY,Z) AX-g( AX,Z) AY+c\{g( Y,Z) X-g(X,Z) Y\\
&+g(\phi Y,Z)\phi X-g(\phi X,Z)\phi Y -2g(\phi X,Y)\phi Z\}\\
&+c\sum_{a=1}^3\{g(\phi_aY,Z)\phi_aX-g(\phi_aX,Z) \phi_aY-2g(\phi_aX,Y)\phi_aZ\\
&+g(\theta_aY,Z)\theta_aX-g(\theta_aX,Z)\theta_aY\}
\end{aligned}$$
%By adopting the notion of $\theta_a$, the Codazzi equation is given by
\begin{align*}%\label{codazzi}
(\nabla_X A)Y-(\nabla_Y A)X=&c\{\eta(X)\phi Y-\eta(Y)\phi X-2g(\phi X,Y)\xi\}\\
&+c\sum_{a=1}^3 \{\eta_a(X)\phi_a Y-\eta_a(Y)\phi_a X -2g(\phi_a X,Y)\xi_a\\&
+\eta_a(\phi X)\theta_a Y-\eta_a(\phi Y)\theta_a X\}.
\end{align*}
As  $M$ is a real hypersurface in $\hat M^m(c)$, by the Gauss equation, we have
\begin{align}\label{eqn:10}
R_\xi X=&\alpha AY-\eta(AY)A\xi+c\{X-\eta(X)\xi-\theta X\}		\notag\\
   &-c\sum^3_{a=1}\{\eta_a(X)\xi_a+3\eta_a(\phi X)\phi\xi_a\}.
\end{align}

We end this section with the following general result.

\begin{theorem}\label{T1}
Let $M$ be an almost contact metric manifold. The structure Jacobi operator $R_\xi$ is semi-parallel if and only if  $R_\xi=0$.
\end{theorem}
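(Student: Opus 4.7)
The forward direction $R_\xi=0\Rightarrow R\cdot R_\xi=0$ is immediate, so the content lies in the converse. My plan begins from a single free observation: since $R_\xi\xi=R(\xi,\xi)\xi=0$, the Reeb vector field $\xi$ always lies in the $0$-eigenspace of $R_\xi$. Moreover, by the standard symmetries of the Riemann tensor (pair exchange together with skew-symmetry in each pair), one checks $g(R_\xi X,Y)=R(X,\xi,\xi,Y)=R(Y,\xi,\xi,X)=g(R_\xi Y,X)$, so the Jacobi operator $R_\xi$ is a self-adjoint $(1,1)$-tensor on $TM$.

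Next, I would unpack the semi-parallel hypothesis. For the $(1,1)$-tensor $R_\xi$, the condition $R\cdot R_\xi=0$ reads
\[
R(X,Y)\,R_\xi Z \;=\; R_\xi\,R(X,Y)Z \qquad \text{for all } X,Y,Z\in TM,
\]
i.e.\ the curvature endomorphism $R(X,Y)$ commutes with $R_\xi$ for every pair $X,Y$. Pointwise, this upgrades to the statement that each eigenspace of $R_\xi$ at $p$ is invariant under $R(X,Y)|_p$ for all $X,Y\in T_pM$; indeed, if $R_\xi Z=\lambda Z$ then $R_\xi(R(X,Y)Z)=R(X,Y)(\lambda Z)=\lambda R(X,Y)Z$.

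Now I would specialise to the kernel. Taking an arbitrary $X\in T_pM$ and setting $Y=Z=\xi$, the invariance above (applied to the $0$-eigenvector $\xi$) yields
\[
R_\xi\bigl(R(X,\xi)\xi\bigr)=0.
\]
But $R(X,\xi)\xi=R_\xi X$ by the very definition of the structure Jacobi operator, so this collapses to $R_\xi^{\,2}X=0$ for every $X$, i.e.\ $R_\xi^{\,2}=0$. Self-adjointness then forces $R_\xi=0$: if $R_\xi X\neq 0$ for some $X$, then $g(R_\xi X,R_\xi X)=g(R_\xi^{\,2}X,X)=0$, a contradiction.

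The argument is short and faces no real technical obstacle; the only conceptual step is passing from ``$R(X,Y)$ commutes with $R_\xi$'' to ``the eigenspaces of $R_\xi$ are curvature-invariant'', after which plugging the known kernel element $\xi$ into the $Z$-slot reduces everything to the nilpotency $R_\xi^{\,2}=0$ of a self-adjoint operator. Notably, no feature of the almost contact structure beyond the existence of the unit Reeb field $\xi$ is used, which matches the generality of the hypothesis and makes the result usable as a black-box input toward Theorem~\ref{T2}.
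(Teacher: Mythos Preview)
Your argument is correct and essentially identical to the paper's proof: both specialise the semi-parallel identity to $Y=Z=\xi$ to obtain $R_\xi^{\,2}=0$, and then invoke self-adjointness of $R_\xi$ to conclude $R_\xi=0$. Your version merely adds more exposition (the eigenspace-invariance viewpoint and an explicit verification of self-adjointness), but the substance is the same.
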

\begin{proof}
Suppose the structure Jacobi operator is semi-parallel. Then
\[
R(X,Y)R_\xi Z-R_\xi R(X,Y)Z=(R(X,Y)\cdot R_\xi)Z=0
\]
for any $X,Y,Z\in TM$. In particular, for $Y=Z=\xi$, we obtain $R_\xi^2X=0$. Since $R_\xi$ is self-adjoint, $R_\xi=0$.
The converse is trivial.
\end{proof}

%%%%%%%%%%%%%%%%%%%%%%%%%%%%%%%%%%%%%%%%
%%%%%%%%%%%%%%%%%%%%%%%%%%%%%%%%%%%%%%%%%

\section{Proof of Theorem~\ref{T2}}
By virtue of Theorem~\ref{T1}, It suffices to show that the structure Jacobi operator cannot be identically zero.
Suppose to the contrary that $R_\xi=0$. Then by (\ref{eqn:10}), we have
\begin{align}\label{eqn:B-10}
\alpha AY-\eta(AY)A\xi+c\{Y-\eta(Y)\xi-\theta Y\}-c\sum^3_{a=1}\{\eta_a(Y)\xi_a+3\eta_a(\phi  Y)\phi\xi_a\}=0.
\end{align}

\begin{claim}
$\xi\notin\mathfrak D$ on an open dense subset of $M$.
\end{claim}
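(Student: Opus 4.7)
My plan is to argue by contradiction: I would show that no nonempty open subset of $M$ can be contained in $\{\xi\in\mathfrak D\}$, which, together with the fact that $\{\xi\notin\mathfrak D\}$ is open by continuity, yields the claim. So suppose, on the contrary, that there exists a nonempty open set $U\subset M$ on which $\eta_a(\xi)=0$ for $a=1,2,3$. Then $\xi^\perp=0$ on $U$, and consequently $\theta=\sum_{a=1}^3\eta_a(\xi)\theta_a$ vanishes throughout $U$. Equation (\ref{eqn:B-10}) therefore simplifies on $U$ to
\[
\alpha AY-\eta(AY)A\xi+c\{Y-\eta(Y)\xi\}-c\sum_{a=1}^3\{\eta_a(Y)\xi_a+3\eta_a(\phi Y)\phi\xi_a\}=0.
\]
Moreover, since the hypothesis of Lemma~\ref{lem:Aphixi_a} is satisfied on $U$, one has $A\phi\xi_a=0$ there.

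The decisive step is to evaluate this identity at $Y=\phi\xi_a$ for a fixed $a$. First I would verify the various scalar coefficients: $A(\phi\xi_a)=0$ annihilates the first two terms; $\eta(\phi\xi_a)=-g(\xi_a,\phi\xi)=0$ since $\phi\xi=0$; and for every $b$, using Lemma~\ref{lem:theta}(b) and skew-adjointness, $\eta_b(\phi\xi_a)=g(\phi_a\xi,\xi_b)=-g(\xi,\phi_a\xi_b)$ vanishes because $\phi_a\xi_b\in\{0,\xi_{a+2},-\xi_{a+1}\}$ and each $\eta_c(\xi)$ is zero on $U$. Finally, $\phi(\phi\xi_a)=\phi^2\xi_a=-\xi_a+\eta(\xi_a)\xi=-\xi_a$, so $\eta_b(\phi(\phi\xi_a))=-\delta_{ab}$.

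Substituting these values collapses the identity to $c\,\phi\xi_a+3c\,\phi\xi_a=4c\,\phi\xi_a=0$. Since $c\neq 0$ and $\phi\xi_a=\phi_a\xi$ is nonzero (as $\xi\in\mathfrak D$ on $U$ cannot be a scalar multiple of $\xi_a$), this contradiction establishes the claim. I do not anticipate any genuine obstacle here: the whole argument is a single well-chosen substitution into the hypothesis $R_\xi=0$, combined with $A\phi\xi_a=0$ from Lemma~\ref{lem:Aphixi_a}. The only care required is the bookkeeping of the quaternionic identities $\phi_a\xi_{a+1}=\xi_{a+2}$ and $\phi_a\xi_{a+2}=-\xi_{a+1}$ when evaluating $\eta_b(\phi\xi_a)$.
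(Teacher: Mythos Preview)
Your argument is correct and follows essentially the same route as the paper's own proof: assume $\xi\in\mathfrak D$ on an open set, use $\theta=0$ and Lemma~\ref{lem:Aphixi_a} to kill the shape-operator terms, and substitute $Y=\phi\xi_a$ into (\ref{eqn:B-10}) to force $\phi\xi_a=0$, a contradiction. The paper states this in one line (using $a=1$) without writing out the intermediate evaluations of $\eta(\phi\xi_a)$, $\eta_b(\phi\xi_a)$, and $\eta_b(\phi^2\xi_a)$ that you carefully checked, but the content is identical.
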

\begin{proof}
Suppose $\xi\in\mathfrak D$ on an open subset $G$ of $M$. For each $x\in G$, we have $\theta=0$.  It follows from
Lemma~\ref{lem:Aphixi_a} that $\phi\xi_1=0$ after putting  $Y=\phi\xi_1$ in  (\ref{eqn:B-10}); a contradiction. Hence we obtain the claim.
\end{proof}

Consider a point $x\in M$  on which $\xi\notin\mathfrak D$ on a neighborhood $G$ of $x$ in $M$.
We define subspaces $\mathcal F$, $\mathcal F(1)$ and $\mathcal F(-1)$ of $T_xM$  by
\begin{align*}
\mathcal F=\{X\in\mathcal H:  \eta(AX)=0\}, \quad
\mathcal F(1)=\mathcal F\cap\mathcal H(1), \quad \mathcal F(-1)=\mathcal F\cap\mathcal H(-1).
\end{align*}
It is clear that
\begin{align*}
AY=\lambda_\varepsilon Y, \quad \alpha\lambda_\varepsilon+c(1-\varepsilon ||\xi^\perp||)=0
\end{align*}
for any $Y\in\mathcal F(\varepsilon)$ and $\varepsilon\in\{1,-1\}$.
By (\ref{eqn:B-10}), we have
\begin{align*}
&(X\alpha)AY+\alpha(\nabla_XA)Y-\eta(AY)\big\{(\nabla_XA)\xi+A\nabla_X\xi\big\} \notag\\
&-\big\{g(\nabla_XA)Y,\xi)+g(AY,\nabla_X\xi)\big\}A\xi+c\{-g(\nabla_X\xi,Y)-\eta(Y)\nabla_X\xi-(\nabla_X\theta)Y\} \notag\\
&+c\sum^3_{a=1}\{-g(\nabla_X\xi_a,Y)\xi_a-\eta_a(Y)\nabla_X\xi_a+3g(\nabla_X\phi\xi_a,Y)\phi\xi_a
-3\eta_a(\phi Y)\nabla_x\phi\xi_a\}=0
\end{align*}
for any $X,Y\in T_xM$.
By using (\ref{eqn:contact}) and (\ref{eqn:global}),  the preceding equation becomes
\begin{align*}
&(X\alpha)AY+\alpha(\nabla_XA)Y-\eta(AY)\big\{(\nabla_XA)\xi+A\phi AX\big\} \\
&-\big\{g(\nabla_XA)Y,\xi)+g(A\phi AX,Y)\big\}A\xi		\\
&+c\{-g(\phi AX,Y)-\eta(Y)\phi AX	+4g(AX,Y)\phi\xi^\perp-4\eta^\perp(\phi Y)AX\} \notag\\
&+c\sum^3_{a=1}\{-g(\phi_aAX,Y)\xi_a-\eta_a(Y)\phi_aAX	\\
&+3g(\theta_aAX,Y)\phi\xi_a-3\eta_a(\phi Y)\theta_aAX-2\eta_a(\phi AX)\theta_aY\}=0.
\end{align*}
By the preceding equation and the Codazzi equation, we have
\begin{align}\label{eqn:B-100}
& 	 (X\alpha)AY- (Y\alpha)AX-\eta(AY)\big\{(\nabla_XA)\xi+A\phi AX\big\}
		+\eta(AX)\big\{(\nabla_YA)\xi+A\phi AY\big\}			\notag\\
&+c\big\{\eta(X)(\phi AY+\alpha\phi Y)-\eta(Y)(\phi AX+\alpha\phi X) -4\eta^\perp(\phi Y)AX+4\eta^\perp(\phi X)AY\big\}		\notag\\
&+2g(c(\phi+\phi^\perp)X-A\phi AX,Y)A\xi-cg(2\alpha\phi X+(\phi A+A\phi )X,Y)\xi	\notag\\
&+c\sum^3_{a=1}\{
			\eta_a(X)(\phi_a AY+\alpha\phi_a Y)-\eta_a(Y)(\phi_a AX+\alpha\phi_a X)	\notag\\
&			-\alpha\eta_a(\phi Y)\theta_a X+\alpha\eta_a(\phi X)\theta_a Y-3\eta_a(\phi Y)\theta_aAX+3\eta_a(\phi X)\theta_aAY		\notag\\
&+2\eta_a(\phi AY)\theta_aX-2\eta_a(\phi AX)\theta_aY-2\eta_a(X)\eta_a(\phi Y)A\xi+2\eta_a(Y)\eta_a(\phi X)A\xi\}			\notag\\
&-g(2\alpha\phi_a X+(\phi_a A+A\phi_a )X,Y)\xi_a+3g((\theta_a A-A\theta_a)X,Y)\phi\xi_a \}
	=0
\end{align}
for any $X,Y\in T_xM$.
By putting $X,Y\in\mathcal F$ in (\ref{eqn:B-100}), we have
\begin{align*}
& 	 (X\alpha)AY- (Y\alpha)AX			\notag\\
&+2g(c(\phi+\phi^\perp)X-A\phi AX,Y)A\xi-cg(2\alpha\phi X+(\phi A+A\phi )X,Y)\xi	\notag\\
&+c\sum^3_{a=1}\{	-g(2\alpha\phi_a X+(\phi_a A+A\phi_a )X,Y)\xi_a+3g((\theta_a A-A\theta_a)X,Y)\phi\xi_a\}
	=0.
\end{align*}
Since the first two terms are in $\mathcal H$ and the remaining are in $\mathcal H^\perp$, we have
\begin{align} \label{eqn:B-120}
&2g(c(\phi+\phi^\perp)X-A\phi AX,Y)A\xi-cg(2\alpha\phi X+(\phi A+A\phi )X,Y)\xi	\notag\\
&+c\sum^3_{a=1}\{	-g(2\alpha\phi_a X+(\phi_a A+A\phi_a )X,Y)\xi_a+3g((\theta_a A-A\theta_a)X,Y)\phi\xi_a\}
	=0
\end{align}
for any $X,Y\in\mathcal F$.
%and
%\[
 %(X\alpha)AY- (Y\alpha)AX=0
%\]
%for any $X,Y\in\mathcal F$.
%\begin{align}\label{eqn:B-110}
%X\alpha=0
%\end{align}
%for any $X\in\mathcal F$.
For any  $\varepsilon\in\{1,-1\}$,  we can further deduce from  (\ref{eqn:B-120}) that 
\begin{align}\label{eqn:B-130}
0=&2g(\phi X,Y)\{c(1-\varepsilon||\xi^\perp||)-\lambda_\varepsilon^2\}A\xi
					+g(\phi X,Y)c(2\alpha+2\lambda_\varepsilon)\left\{-\xi+\frac\varepsilon {||\xi^\perp||}\xi^\perp\right\} \notag\\
=&g(\phi X,Y)(\alpha+\lambda_\varepsilon)\left\{-\lambda_\varepsilon A\xi
					-c\left(\xi-\frac\varepsilon {||\xi^\perp||}\xi^\perp\right)\right\}	
\end{align}
 for any $X,Y\in\mathcal F(\varepsilon)$; and 
\begin{align}\label{eqn:B-140}
0=\sum^3_{a=1}\{-(2\alpha+\lambda_\varepsilon+\lambda_{-\varepsilon})g(\phi_aX,Y)\xi_a
					+3(\lambda_\varepsilon-\lambda_{-\varepsilon})g(\theta_aX,Y)\phi\xi_a\}
\end{align}
for any $X\in\mathcal F(\varepsilon)$ and $Y\in\mathcal F(-\varepsilon)$.

\begin{claim}\label{claim:A}
$\dim \mathcal H\geq 8$.
\end{claim}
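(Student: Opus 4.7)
The plan is to prove the claim by first performing a dimensional reduction and then handling the residual case using (\ref{eqn:B-140}) together with the quaternionic structure relations. The dimension formulas recorded in Section~2 give $\dim\mathcal H=4m-4$ when $\xi\in\mathfrak D^\perp$ and $\dim\mathcal H=4m-8$ otherwise; hence for $m\geq 3$ the only configuration with $\dim\mathcal H<8$ is $m=3$ together with $\xi\notin\mathfrak D^\perp$, in which case $\dim\mathcal H=4$ and $\dim\mathcal H(\varepsilon)=2$ for each $\varepsilon\in\{1,-1\}$. The task therefore reduces to ruling out this remaining situation.

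Working at such a point, I would first record the basic arithmetic. Since $\dim\mathcal F(\varepsilon)\geq\dim\mathcal H(\varepsilon)-1=1$ for each $\varepsilon$, nonzero elements of $\mathcal F(\varepsilon)$ exist, and the identity $\alpha\lambda_\varepsilon+c(1-\varepsilon||\xi^\perp||)=0$ combined with $0<||\xi^\perp||<1$ and $c\neq 0$ precludes $\alpha=0$ (otherwise $c(1\pm||\xi^\perp||)=0$, impossible). A direct computation then gives $\lambda_1-\lambda_{-1}=2c||\xi^\perp||/\alpha\neq 0$.

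The heart of the argument is to choose a nonzero $X\in\mathcal F(1)$ and an arbitrary $Y\in\mathcal F(-1)$, and apply (\ref{eqn:B-140}). Because $\xi\notin\mathfrak D^\perp$, the six vectors $\{\xi_a,\phi\xi_a\}_{a=1,2,3}$ are linearly independent in the $7$-dimensional space $\mathcal H^\perp$, so each coefficient in (\ref{eqn:B-140}) must vanish. Since $\lambda_1-\lambda_{-1}\neq 0$, this forces $g(\theta_a X,Y)=0$ for $a=2,3$ (the case $a=1$ being automatic as $\theta_1 X=X\in\mathcal H(1)\perp\mathcal H(-1)\ni Y$). Thus $Y$ is orthogonal to both $\theta_2 X$ and $\theta_3 X$ in the $2$-dimensional space $\mathcal H(-1)$.

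The crux is then to verify that $\theta_2 X$ and $\theta_3 X$ are linearly independent, for this would force $Y\perp\mathcal H(-1)$ and hence $Y=0$, contradicting $\dim\mathcal F(-1)\geq 1$. For this, I would use $\eta_1|_{\mathcal H}=0$ and the $3$-structure relation $\phi_3\phi_1-\xi_3\otimes\eta_1=\phi_2$ together with $\phi_1^{-1}=-\phi_1$ on $\mathcal H$ (from $\phi_1^2=-I$ there) to deduce $\phi_3=-\phi_2\phi_1$ on $\mathcal H$. If $\theta_3 X=\mu\theta_2 X$ with $\mu\in\mathbb R$, substituting $\theta_a=\phi_a\phi$ and cancelling via the injectivity of $\phi_2$ on $\mathcal H$ yields $\phi_1(\phi X)=-\mu(\phi X)$, which is impossible since $\phi_1^2=-I$ on $\mathcal H$ rules out real eigenvectors and $\phi X\neq 0$. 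This delicate use of the quaternionic structure, rather than a bare dimension count, is the main technical obstacle in the proof.
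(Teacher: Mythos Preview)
Your argument is correct and follows essentially the same route as the paper: reduce to the case $\dim\mathcal H=4$ (forcing $0<||\xi^\perp||<1$, so that $\{\xi_a,\phi\xi_a\}$ are linearly independent and $\lambda_1-\lambda_{-1}=2c||\xi^\perp||/\alpha\neq 0$), and then exploit (\ref{eqn:B-140}) with $X\in\mathcal F(1)$ to obtain a contradiction. The only difference is in the execution of this last step: the paper fixes a unit $V\in\mathcal F(1)$, substitutes the explicit test vector $Y=\phi_2 V$, and reads off $0=-(2\alpha+\lambda_1+\lambda_{-1})\xi_2-3(\lambda_1-\lambda_{-1})\phi\xi_3$ directly, whereas you let $Y\in\mathcal F(-1)$ be arbitrary and conclude $Y\perp\mathrm{span}\{\theta_2X,\theta_3X\}=\mathcal H(-1)$; your version is in fact slightly more careful, since it does not tacitly assume $\phi_2 V\in\mathcal F(-1)$.
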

\begin{proof}
Suppose $\dim \mathcal H=4$.
Since $\dim M=4m-1\geq 11$, we have $\xi\notin\mathfrak D^\perp$  or $0<||\xi^\perp||<1$.
Take a unit vector $V\in\mathcal F(1)$ such that
$\mathcal H(1)=\mathbb RV\oplus\mathbb R\phi_1V$ and $\mathcal H(-1)=\mathbb R\phi_2V\oplus\mathbb R\phi_3V$.

Substituting  $X=V$  and $Y=\phi_2 V$ in (\ref{eqn:B-140}), we obtain
\begin{align*}
0	=&-(2\alpha+\lambda_1+\lambda_{-1})\xi_2
					-3(\lambda_1-\lambda_{-1})\phi\xi_3	
\end{align*}
for any $Y\in\mathcal F(1)$. Since $\{\xi_a,\phi\xi_a\}_{a\in\{1,2,3\}}$ is linearly independent,
$-2c\alpha^{-1}||\xi^\perp||=\lambda_1-\lambda_{-1}=0$; a contradiction. Hence, the claim is obtained.
\end{proof}
According to Claim~\ref{claim:A},  there exists $X\in\mathcal F(\varepsilon)$ such that  $X\perp$ $\phi A\xi$.
Taking such a vector $X$ and $Y=\phi X$ in (\ref{eqn:B-130}), we obtain
\begin{align}\label{eqn:B-150}
(\alpha+\lambda_\varepsilon)\left\{\lambda_\varepsilon A\xi+c\left(\xi-\frac{\varepsilon}{||\xi^\perp||}\xi^\perp\right)\right\}=0
\end{align}
for any $\varepsilon\in\{1,-1\}$.

\begin{claim}\label{claim:B-I}
$||\xi^\perp||=1$ on $M$.
\end{claim}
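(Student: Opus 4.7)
The plan is to assume for contradiction that $||\xi^\perp||<1$ at some point of $G$ and derive a failure by systematically exploiting (\ref{eqn:B-150}) for both $\varepsilon=\pm1$ together with (\ref{eqn:B-140}) and a dimension count from Claim~\ref{claim:A}.

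First I would dispose of easy subcases. Under the assumption, $\xi$ and $\xi_1=\xi^\perp/||\xi^\perp||$ are linearly independent. If $\alpha=0$ at such a point, the identity $\alpha\lambda_\varepsilon+c(1-\varepsilon||\xi^\perp||)=0$ forces $||\xi^\perp||=1$, so I may assume $\alpha\neq0$. In (\ref{eqn:B-150}), if $\alpha+\lambda_\varepsilon\neq0$ for both $\varepsilon$, then $\lambda_1 A\xi=-c(\xi-\xi_1)$ and $\lambda_{-1}A\xi=-c(\xi+\xi_1)$; eliminating $A\xi$ via the independence of $\xi,\xi_1$ forces $\lambda_1=\lambda_{-1}=0$, which is incompatible with $\alpha\lambda_\varepsilon+c(1-\varepsilon||\xi^\perp||)=0$. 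If instead $\alpha+\lambda_\varepsilon=0$ for both $\varepsilon$, then $\alpha^2=c(1\pm||\xi^\perp||)$ for both signs, forcing $||\xi^\perp||=0$. Thus exactly one factor vanishes; by symmetry of the roles of $\pm1$ in what follows, suppose $\lambda_1=-\alpha$ (so $\alpha^2=c(1-||\xi^\perp||)$, in particular $c>0$) while $\alpha+\lambda_{-1}\neq0$. The $\varepsilon=-1$ instance of (\ref{eqn:B-150}) then determines $A\xi=\alpha(\xi+\xi_1)/(1+||\xi^\perp||)$.

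Next I would substitute $\lambda_1=-\alpha$ and $\lambda_{-1}=-c(1+||\xi^\perp||)/\alpha$ into (\ref{eqn:B-140}). A short computation yields
\[
\lambda_1-\lambda_{-1}=\frac{2c||\xi^\perp||}{\alpha}\neq0,\qquad 2\alpha+\lambda_1+\lambda_{-1}=\alpha+\lambda_{-1}\neq0,
\]
so the linear independence of $\{\xi_a,\phi\xi_a\}_a$ collapses (\ref{eqn:B-140}) to $g(\phi_aX,Y)=0=g(\theta_aX,Y)$ for every $X\in\mathcal F(1)$, $Y\in\mathcal F(-1)$ and $a\in\{1,2,3\}$.

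The closing step is a dimension argument. Taking $a=2$ and invoking $\phi_2\mathcal H_1(1)=\mathcal H_1(-1)=\mathcal H(-1)$, the relation $g(\phi_2 X,Y)=0$ says $\phi_2\mathcal F(1)\perp\mathcal F(-1)$ inside $\mathcal H(-1)$. Since $\phi_2$ is injective on $\mathcal H$, $\dim\phi_2\mathcal F(1)=\dim\mathcal F(1)$, while $\mathcal F(-1)$ is cut out of $\mathcal H(-1)$ by a single linear functional and hence has codimension at most one there, giving $\dim\mathcal F(1)\leq1$. On the other hand Claim~\ref{claim:A} yields $\dim\mathcal H\geq8$, so $\dim\mathcal H(\varepsilon)\geq4$ and hence $\dim\mathcal F(\varepsilon)\geq3$, the desired contradiction. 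The main obstacle I anticipate is the trichotomy analysis of (\ref{eqn:B-150}): once the asymmetric subcase $\lambda_1=-\alpha$, $\alpha+\lambda_{-1}\neq0$ is isolated, the final dimension step is quick, but the bookkeeping needed to rule out the symmetric alternatives is where care is required.
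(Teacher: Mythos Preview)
Your argument is correct, and it uses the same two ingredients as the paper --- equation (\ref{eqn:B-150}) to pin down $A\xi$, and equation (\ref{eqn:B-140}) to produce a contradiction --- but you take a longer detour than necessary. The paper simply notes that since $\lambda_1\neq\lambda_{-1}$, at least one of $\alpha+\lambda_\varepsilon$ is nonzero; for that $\varepsilon$, (\ref{eqn:B-150}) gives $A\xi\in\mathrm{span}\{\xi,\xi_1\}\subset\mathcal H^\perp$, so $\eta(AX)=g(X,A\xi)=0$ for every $X\in\mathcal H$, i.e.\ $\mathcal F(\pm1)=\mathcal H(\pm1)$. Then substituting $X\in\mathcal H(1)$, $Y=\phi_2X\in\mathcal H(-1)$ into (\ref{eqn:B-140}) yields the same contradiction as in Claim~\ref{claim:A}. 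You effectively re-derive $A\xi\in\mathrm{span}\{\xi,\xi_1\}$ in your ``exactly one vanishes'' subcase (your formula $A\xi=\alpha(\xi+\xi_1)/(1+||\xi^\perp||)$ says precisely this), but then fail to exploit it: had you observed $\mathcal F(\pm1)=\mathcal H(\pm1)$, both your trichotomy analysis and the final dimension count would become superfluous. Your dimension argument is sound, but it is doing work that the simple observation $A\xi\perp\mathcal H$ renders unnecessary.
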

\begin{proof}
Suppose $0<||\xi^\perp||<1$ on the open subset $G$ of $M$.
It clear that $\alpha+\lambda_1$ and $\alpha+\lambda_{-1}$ can not be both zero as
$\lambda_1-\lambda_{-1}=-2c\alpha^{-1}||\xi^\perp||\neq0$.
Fixed $\varepsilon\in\{1,-1\}$ such that $\alpha+\lambda_\varepsilon\neq0$.
It follows from (\ref{eqn:B-150}) that
\begin{align*}
\lambda_\varepsilon A\xi+c\left(\xi-\frac{\varepsilon}{||\xi^\perp||}\xi^\perp\right)=0.
\end{align*}
This imply that $A\xi\perp$ $\mathcal H$, so $\mathcal F(1)=\mathcal H(1)$ and $F(-1)=\mathcal H(-1)$.
Taking $X\in\mathcal H(1)$ and $Y=\phi_2X$ in (\ref{eqn:B-140}),  we can obtain a  contradiction by using a similar method as in the proof of Claim~\ref{claim:A}. Hence, $||\xi^\perp||=1$ at the point $x$. By the connectedness of $M$ and the continuity of $||\xi^\perp||$,
we conclude that $||\xi^\perp||=1$ on $M$.
\end{proof}

Since $||\xi^\perp||=1$ on $M$ or $\xi\in\mathfrak D^\perp$ everywhere, we have $\lambda_1=0$ and $\lambda_{-1}=-2c/\alpha$ ($=\lambda$, for simplicity).
Moreover, we have
\begin{align}\label{eqn:B-160}
-\sum^3_{a=1}\eta_a(\phi Y)\phi\xi_1=\sum^3_{a=1}\eta_a(Y)\xi_a-\eta(Y)\xi.
\end{align}
It follows from (\ref{eqn:B-10}) and (\ref{eqn:B-160}) that
\begin{align}\label{eqn:B-180}
\alpha AY-\eta(AY)A\xi+c\{Y-4\eta(Y)\xi-\theta Y\}+2c\sum^3_{a=1}\eta_a(Y)\xi_a=0.
\end{align}
On the other hand, we have
\begin{align*}
\sum^3_{a=1}&\{g(Y,\nabla_X\phi\xi_a)\phi\xi_a-\eta_a(\phi Y)\nabla_X\phi\xi_a\}	\\
=&\sum^3_{a=1}\{g(Y,\nabla_X\xi_a)\xi_a+\eta_a(Y)\nabla_X\xi_a\}-g(Y,\nabla_X\xi)\xi-\eta(Y)\nabla_X\xi.
\end{align*}
By using (\ref{eqn:contact}), we have
\begin{align}\label{eqn:B-200}
\sum^3_{a=1}&\{g(Y,\theta_aAX)\phi\xi_a-\eta_a(\phi Y)\theta_aAX\} \notag\\
=&\sum^3_{a=1}\{g(Y,\phi_aAX)\xi_a+\eta_a(Y)\phi_aAX\}-g(Y,\phi AX)\xi-\eta(Y)\phi AX.
\end{align}
\begin{claim}\label{claim:B-II-a}
$\lambda A\xi+2c\xi\neq0$ .
\end{claim}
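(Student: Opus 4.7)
The plan is a proof by contradiction. Suppose instead that $\lambda A\xi+2c\xi=0$. Since on the working neighbourhood $\lambda=-2c/\alpha$ (and $\alpha\neq 0$, for otherwise the relation $\alpha\lambda_{-1}+2c=0$ would force $c=0$), this assumption is equivalent to $A\xi=\alpha\xi$; in other words, $M$ is Hopf at $x$. The goal is therefore to show that $R_\xi=0$ together with Hopf is inconsistent.

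Under the Hopf hypothesis I would first pin down the action of $A$ on $\mathfrak D^\perp$. Substituting $Y=\xi_2$ into \eqref{eqn:B-180}, using $\eta(A\xi_2)=g(\xi_2,A\xi)=\alpha g(\xi_2,\xi)=0$ together with $\theta\xi_2=\theta_1\xi_2=\phi\xi_3=\xi_2$ (Lemma~\ref{lem:theta}(d) in the canonical frame with $\xi_1=\xi$), the equation collapses to $\alpha A\xi_2+2c\xi_2=0$, hence $A\xi_2=\lambda\xi_2$. The same calculation with $Y=\xi_3$ yields $A\xi_3=\lambda\xi_3$, so $A$ is completely known on $\mathfrak D^\perp$.

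The crucial step is then to plug the very special choice $X=\xi$, $Y=\xi_2$ into \eqref{eqn:B-100}. The Hopf condition combined with $\phi\xi=0$ kills $\phi A\xi$ and $A\phi A\xi$, while the frame gives $\eta^\perp(\phi\xi)=\eta^\perp(\phi\xi_2)=0$ and $\eta(\xi_2)=0$, eliminating a large block of terms. A direct computation of $(\nabla_{\xi_2}A)\xi+A\phi A\xi_2$ gives $(\xi_2\alpha)\xi-\alpha\lambda\xi_3$, and the remaining $\xi_3$-contributions from the $c\{\cdots\}$ block and the $c\sum_a\{\cdots\}$ block can be evaluated using Lemma~\ref{lem:theta} and the explicit identities $\phi\xi_2=-\xi_3$, $\phi_2\xi=-\xi_3$, $\phi_3\xi=\xi_2$, $\theta_a\xi=-\xi_a$ together with $A\xi_a=\lambda\xi_a$. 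Collecting everything, the $\xi_3$-component of the left-hand side of \eqref{eqn:B-100} comes out to $-\alpha^2\lambda-2c(\alpha+\lambda)$. Invoking the basic identity $\alpha\lambda=-2c$ this simplifies to $-2c\lambda$, and its enforced vanishing forces $c\lambda=0$, contradicting $\lambda=-2c/\alpha\neq 0$.

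The main obstacle is the bookkeeping in this last step: \eqref{eqn:B-100} carries about a dozen terms inside the $c\sum_a\{\cdots\}$ sum, each of which must be evaluated on the pair $(\xi,\xi_2)$. What rescues the computation is that under Hopf most of these terms vanish immediately (whenever $\phi$ meets $\xi$, or some $\eta_a$ meets $\phi\xi$), so the surviving pieces line up into a single linear identity in $\alpha,\lambda,c$ whose combination with $\alpha\lambda=-2c$ isolates precisely the forbidden factor $c\lambda$.
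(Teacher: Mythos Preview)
Your argument is correct, and it takes a genuinely different route from the paper's proof.

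The paper also begins by observing that $\lambda A\xi+2c\xi=0$ forces $A\xi=\alpha\xi$, but then, rather than working inside \eqref{eqn:B-100}, it solves \eqref{eqn:B-180} globally for $A$ (obtaining $AY=-\tfrac{c}{\alpha}(Y-\theta Y)+\tfrac{\alpha^2+4c}{\alpha}\eta(Y)\xi-\tfrac{2c}{\alpha}\sum_a\eta_a(Y)\xi_a$), notes that this implies $A\mathfrak D^\perp\subset\mathfrak D^\perp$, and invokes the classification theorem \cite[Theorem~6.1]{lee-loo} to conclude $\alpha^2+4c=0$. The contradiction is then reached by comparing this constraint with the explicit principal curvatures of the model hypersurfaces. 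Your approach sidesteps the external classification entirely: after identifying the eigenvalues on $\mathfrak D^\perp$ from \eqref{eqn:B-180}, you feed $X=\xi$, $Y=\xi_2$ into \eqref{eqn:B-100} and extract an elementary scalar relation $-\alpha^2\lambda-2c(\alpha+\lambda)=0$ which, combined with $\alpha\lambda=-2c$, collapses to $2c\lambda=0$. This is more self-contained and computationally direct; the paper's route is shorter on the page because the heavy lifting is outsourced to \cite{lee-loo}, but requires that machinery to be available.
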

\begin{proof}
Suppose  $\lambda A\xi+2c\xi=0$. Then $A\xi=\alpha\xi$ and $\alpha\lambda+2c=0$.
It follows that from (\ref{eqn:B-180}) that
\begin{align*}
AY=-\frac c\alpha(Y-\theta Y)+\frac{\alpha^2+4c}\alpha\eta(Y)\xi-\frac{2c}\alpha\sum^3_{a=1}\eta_a(Y)\xi_a.
\end{align*}
By \cite[Theorem 6.1]{lee-loo},  we obtain
\begin{align}\label{eqn:B-202}
\alpha^2+4c=0
\end{align}
and so $c<0$. Furthermore, we have either
\[
\frac{- c}\alpha=\frac{\sqrt{-2c}\tanh(\sqrt{-2c} r)}2, \quad \frac{-2c}\alpha=\sqrt{-2c}\coth(\sqrt{-2c} r), \quad r>0
\]
or
 \[
\frac {-c}\alpha=\frac{\sqrt{-2c}}2, \quad \frac{-2c}\alpha=\sqrt{-2c}.
\]
However, both cases contradict (\ref{eqn:B-202}). Accordingly, we obtain the claim.
\end{proof}

By using Claim~\ref{claim:B-II-a},  (\ref{eqn:B-150}) and (\ref{eqn:B-180}),
there exists a unit vector field $U$ tangent to $\mathcal H(-1)\oplus(\mathfrak D^\perp\ominus\mathbb R\xi)$ and functions
$\tau$, $\beta$ ($\beta\neq0$) on $M$ such that
\begin{align}\label{eqn:B-220}
\left.\begin{array}{ll}
A\xi=\alpha\xi+\beta U &\\
AU=\beta \xi+\tau U&\\
AY=\lambda Y, \quad  &(Y\in\mathcal F(-1))	\\
AX=0, \quad & (X\in\mathcal H(1))\\
\alpha\tau+\alpha^2=\beta^2	\\
\lambda+\alpha=0, \quad \alpha^2=2c	.
\end{array}\right\}
\end{align}
By using  (\ref{eqn:B-200}) and (\ref{eqn:B-220}),  (\ref{eqn:B-100}) is descended to
\begin{align}\label{eqn:B-240}
& 	-\eta(AY)\big\{\alpha\phi AX+(X\beta)U+\beta\nabla_XU\big\}
		+\eta(AX)\big\{\alpha\phi AY+(Y\beta)U+\beta\nabla_YU\big\}			\notag\\
&+c\big\{\eta(X)(4\phi AY+\alpha\phi Y)
                 -\eta(Y)(4\phi AX+\alpha\phi X)		\big\}		\notag\\
&+2g(c(\phi+\phi^\perp)X-A\phi AX,Y)A\xi-cg(2\alpha\phi X+4(\phi A+A\phi )X,Y)\xi	\notag\\
&+c\sum^3_{a=1}\{\eta_a(X)(-2\phi_a AY+\alpha\phi_a Y)
																	-\eta_a(Y)(-2\phi_a AX+\alpha\phi_a X)	\notag\\
&		 -\alpha\eta_a(\phi Y)\theta_a X
		 	+\alpha\eta_a(\phi X)\theta_a Y		
		           +2\eta_a(\phi AY)\theta_aX
							  -2\eta_a(\phi AX)\theta_aY		\notag\\
&		-2\eta_a(X)\eta_a(\phi Y)A\xi+2\eta_a(Y)\eta_a(\phi X)A\xi\}			\notag\\
&		+g(-2\alpha\phi_a X+(\phi_a A+A\phi_a )X,Y)\xi_a \}
	=0.
\end{align}
On the other hand, by the Codazzi equation, we obtain
\begin{align}\label{eqn:B-250}
cg((\phi+\phi^\perp)Y,Z)-2c\sum^3_{a=1}\eta_a(Y)\eta_a(\phi Z)=g((\nabla_\xi A)Y-(\nabla_YA)\xi,Z).
\end{align}
Substituting $Z=\xi$ in (\ref{eqn:B-250}) gives
\[
(\xi\beta)g(U,Y)+\beta g(\nabla_\xi U,Y)+4\beta\alpha g(\phi U,Y).
\]
Letting $Y=U$ in the preceding equation gives
\begin{align}	
\xi\beta=&0																								\label{eqn:B-260}\\
\nabla_\xi U+4\alpha\phi U=&0							\label{eqn:B-270}
\end{align}
Next,  with the help of (\ref{eqn:B-220}), after putting $Y\in\mathcal H(1)$ and $Z=U$ in (\ref{eqn:B-250}) gives
\begin{align}\label{eqn:B-280}
Y\beta=0
\end{align}
for any $Y\in\mathcal H(1)$.
By putting $X=\xi$ and $Y\perp$ $\xi$ in (\ref{eqn:B-240}), we have
\begin{align}\label{eqn:B-290}
&Y\beta+(\alpha^2+2\beta^2)g(Y,\phi U)+3cg(Y,\phi U-\phi^\perp U)
								+\sum^3_{a=1}\{-2\alpha\eta_a(\phi AY)\eta_a(U)				\notag\\
&-\alpha\beta g(\phi_a Y,U)\eta_a(U)-\alpha\beta	g(\theta_aY,U)\eta_a(\phi U)-2c\alpha\eta_a(U)\eta_a(\phi U)\}=0							
\end{align}
for any $Y\perp$ $\xi$.
In particular, for $Y\in\mathcal H(1)$, with the help of (\ref{eqn:B-280}), we obtain
\begin{align*}
0=-\sum^3_{a=1}\{g(\phi_a Y,U)\eta_a(U)-g(\theta_aY,U)\eta_a(\phi U)\}=2\sum^3_{a=1}\eta_a(U)g(\phi_aU,Y)
\end{align*}
for any $Y\in\mathcal H(1)$.

Denote by $U^-$ the $\mathcal H(-1)$-component of $U$.
If $U$ is tangent to $\mathfrak D^\perp$ on an open subset $G$ of $M$. Then for each $x\in G$, $\mathcal F(-1)=\mathcal H(-1)$
and so $A\mathfrak D^\perp\subset\mathfrak D^\perp$. By virtue of \cite[Theorem 3.6]{lee-loo}, $\xi$ is principal on $G$.
This contradicts Claim~\ref{claim:B-II-a}. Hence, we assume that $U^-\neq0$.
By putting $Y=\phi_bU^-$, $b\in\{2,3\}$ in the preceding equation, we obtain $\eta_2(U)=\eta_3(U)=0$.
Consequently, $U=U^-\in\mathcal H(-1)$ and $\phi U=\phi^\perp U$. These, together with (\ref{eqn:B-260}) and (\ref{eqn:B-290}) give
\begin{align*}
Y\beta=-(\alpha^2+2\beta^2)g(Y,\phi U)
\end{align*}
for any vector field $Y$ tangent to $M$.
It follows that
\begin{align*}
(XY-\nabla_XY)\beta=(\alpha^2+2\beta^2)\{4\beta g(X,\phi U)g(Y,\phi U)-g(Y,\nabla_X\phi U)\}.
\end{align*}
Hence
\[
g(Y,\nabla_X\phi U)-g(X,\nabla_YU)=0.
\]
By virtue of (\ref{eqn:B-220}) and (\ref{eqn:B-270}), after substituting $X=\xi$ and $Y=U$ in the preceding equation give
$4\alpha+\tau=0$. But then $\beta^2=\alpha\tau+\alpha^2=-3\alpha^2$; a contradiction.
This completes the proof.

%%%%%%%%%%%%%%%%%%%%%%%%%%%%%%%%%%%%%%%%
%%%%%%%%%%%%%%%%%%%%%%%%%%%%%%%%%%%%%%%%%

\end{document}